\newtheorem{thm}{Theorem}[section]
\newtheorem{prop}[thm]{Proposition}
\newtheorem{lem}[thm]{Lemma}
\newtheorem*{quest}{Question}
\theoremstyle{definition}
\newtheorem{defn}[thm]{Definition}
\theoremstyle{remark}
\newtheorem{rem}[thm]{Remark}
\numberwithin{equation}{section}
\title{Milnor's isospectral tori and harmonic maps}
\author{Mark J.~D.~Hamilton}
\address{      Fachbereich Mathematik\\
               Universit\"at Stuttgart\\
               Pfaffenwaldring 57\\
               70569 Stuttgart\\
               Germany}
\email{mark.hamilton@math.lmu.de}
\date{\today}
\begin{document}

\begin{abstract}
A well-known question asks whether the spectrum of the Laplacian on a Riemannian manifold $(M,g)$ determines the Riemannian metric $g$ up to isometry. A similar question is whether the energy spectrum of all harmonic maps from a given Riemannian manifold $(\Sigma,h)$ to $M$ determines the Riemannian metric on the target space. We consider this question in the case of harmonic maps between flat tori. In particular, we show that the two isospectral, non-isometric $16$-dimensional flat tori found by Milnor cannot be distinguished by the energy spectrum of harmonic maps from $d$-dimensional flat tori for $d\leq 3$, but can be distinguished by certain flat tori for $d\geq 4$. This is related to a property of the Siegel theta series in degree $d$ associated to the $16$-dimensional lattices in Milnor's example. 
\end{abstract}

\maketitle

\section{Introduction}
Let $(\Sigma,h)$ and $(M,g)$ be smooth, connected Riemannian manifolds, where $\Sigma$ is closed and oriented (in the following all manifolds are assumed smooth and connected). The Dirichlet energy of a smooth map $f\colon\Sigma\rightarrow M$ is defined by
\begin{equation}\label{eqn:def energy}
E[f]=\tfrac{1}{2}\int_\Sigma|df|^2\,\mathrm{dvol}_h,
\end{equation}
where $df\colon T\Sigma\rightarrow TM$ is the differential and $|df|^2$ is the length-squared determined by the Riemannian metrics $h$ and $g$. Stationary points of the functional $E[f]$ under variations of $f$ are called harmonic maps \cite{ES}.

The energy spectrum of harmonic maps $f\colon\Sigma\rightarrow M$, for fixed Riemannian metrics on both manifolds, is the set of critical values of the energy functional:
\begin{equation*}
\{E[f]\mid f\colon\Sigma\rightarrow M\text{ harmonic}\}\subset\mathbb{R}_{\geq 0}.
\end{equation*}
The energy spectrum of harmonic maps for specific source and target spaces has been studied in \cite{AS, CCR, V}. In general the energy spectrum does not have to be a discrete subset of $\mathbb{R}_{\geq 0}$ \cite{F, LW, L}. Depending on the situation one would also like to include the multiplicities of values in the energy spectrum, i.e.~how often an energy value is attained by several (in a suitable sense) different harmonic maps.
\begin{rem}
In the case of $\Sigma=S^1$ harmonic maps $\gamma\colon S^1\rightarrow M$ are precisely the closed geodesics in $M$. The energy of a closed geodesic is related to its length $L[\gamma]$ by $E[\gamma]=\frac{1}{4\pi R}L[\gamma]^2$, where $R$ is the radius of $S^1$, because $|\dot{\gamma}|$ is constant. The set
\begin{equation*}
\{L[\gamma]\mid \gamma\colon S^1\rightarrow M\text{ closed geodesic}\}\subset\mathbb{R}_{\geq 0}
\end{equation*}
together with multiplicities is known as the length spectrum of $(M,g)$. Here the multiplicity of a length $l$ is defined as the number of free homotopy classes of closed loops $\gamma\colon S^1\rightarrow M$ that contain a closed geodesic of length $l$ (see e.g.~\cite{GM}).
\end{rem}
If we assume that a precise notion of energy spectrum for harmonic maps is given, we can ask the following question.
\begin{quest}
Let $(M,g)$ and $(M',g')$ be Riemannian manifolds which are not isometric. Does there exist a closed, oriented Riemannian manifold $(\Sigma,h)$, such that the energy spectrum of harmonic maps $\Sigma\rightarrow M$ and $\Sigma\rightarrow M'$ is different?
\end{quest}
This question is similar to the question whether {\em ''one can hear the shape of a drum''}, i.e.~whether the spectrum of eigenvalues of the Laplacian on functions on a Riemannian manifold determines the metric up to isometry \cite{K, GWW}.\footnote{In \cite{McR} the corresponding question for a spectrum of immersed totally geodesic surfaces in Riemannian manifolds has been studied.}

In the particular case where both the source and target manifold are flat tori, the notion of energy spectrum can be given a precise meaning. We consider flat tori $(T^m=\mathbb{R}^m/\Lambda_m,g_m)$ and $(T^n=\mathbb{R}^n/\Lambda_n,g_n)$ for lattices $\Lambda_m$ and $\Lambda_n$ (without loss of generality we always use the Riemannian metric induced from the standard Euclidean scalar product on $\mathbb{R}^m$ and $\mathbb{R}^n$). It is well-known that a map $f\colon T^m\rightarrow T^n$ is harmonic if and only if it is affine (see Section \ref{sect:flat tori} for more details). Affine maps are of the form
\begin{align*}
f_{C,s}\colon T^m&\longrightarrow T^n\\
[x]&\mapsto f([x])=[Cx+s]
\end{align*}
where $C\in\mathbb{R}^{n\times m}$ with $C\Lambda_m\subset\Lambda_n$ and $s\in\mathbb{R}^n$. Since the differential acts by multiplication with the constant matrix $C$, the energy of the harmonic map $f=f_{C,s}$ is equal to
\begin{equation}\label{eqn:energy harmonic tori}
E[f]=\tfrac{1}{2}||C||^2\mathrm{vol}_{g_m}(T^m),
\end{equation}
where $||C||^2=\mathrm{Tr}(C^tC)$ and $\mathrm{Tr}$ denotes the trace, $t$ the transpose.

The set of affine maps between the tori can be identified with
\begin{equation*}
\mathrm{Hom}_{\mathbb{Z}}(\Lambda_m,\Lambda_n)\times T^n\cong \mathbb{Z}^{n\times m}\times T^n.
\end{equation*}
There is a unique affine and hence harmonic map in each homotopy class of maps from $T^m$ to $T^n$, up to translations (a map to $T^n$ is determined up to homotopy by the map $f_*$ on first integral homology, corresponding to an integral matrix of winding numbers). The translations on $T^n$ are isometries and do not change the energy of the map. We can thus define
\begin{defn}
The energy spectrum
\begin{equation*}
\{E[f]\mid f\colon T^m\rightarrow T^n\text{ harmonic}\}
\end{equation*}
of harmonic maps between flat tori is a countable subset of $\mathbb{R}_{\geq 0}$. The multiplicity of a value $E$ in the energy spectrum is defined as the number of homotopy classes of harmonic maps $f\colon T^m\rightarrow T^n$ with $E[f]=E$. This includes the case of multiplicity $0$ if $E$ does not occur in the energy spectrum.
\end{defn}
\begin{rem}
This can be generalized to harmonic maps $f\colon \Sigma\rightarrow T^n$ from any closed, oriented Riemannian manifold $(\Sigma,h)$; cf.~Section \ref{sect:flat tori}.
\end{rem}
As an explicit example we consider the $16$-dimensional isospectral tori constructed by Milnor \cite{M}. The tori are given by 
\begin{align*}
T_{8,8}&=\mathbb{R}^{16}/\Gamma_8\oplus\Gamma_8\\
T_{16}&=\mathbb{R}^{16}/\Gamma_{16}
\end{align*}
where $\Gamma_8\oplus\Gamma_8$ and $\Gamma_{16}$ are certain integral even unimodular lattices in $\mathbb{R}^{16}$ (with the restriction of the standard Euclidean scalar product). These lattices are also denoted by $E_8\oplus E_8$ and $D_{16}^+$. The flat tori $T_{8,8}$ and $T_{16}$ are non-isometric, but the spectra of the Laplacian on functions and differential forms (with multiplicities) are the same. 

Let $(\Lambda,Q)$ be an even, positive definite, unimodular lattice of rank $m$ with integral quadratic form $Q$. The Siegel upper half space of degree $d$ is defined by
\begin{equation*}
\mathcal{H}_d=\left\{Z=X+iY\left|\, X,Y\in\mathbb{R}^{d\times d}\text{ symmetric},Y\text{ positive definite}\right.\right\}\subset\mathbb{C}^{d\times d}.
\end{equation*}
For a $d$-tuple of elements $x=(x_1,\ldots,x_d)$ in $\Lambda$ define the matrix
\begin{equation*}
Q(x)=(Q(x_i,x_j))_{i,j=1,\ldots,d}\in\mathbb{Z}^{d\times d}.
\end{equation*}
The theta series \cite{Fr, Kl} of degree $d$ associated to the lattice $\Lambda$ is
\begin{equation*}
\Theta_{\Lambda}^{(d)}(Z)=\sum_{x\in\Lambda^d}e^{\pi i \mathrm{Tr}(Q(x)Z)}
\end{equation*}
where $Z\in\mathcal{H}_d$. The theta series is a Siegel modular form of degree $d$ and weight $\frac{m}{2}$. It has a Fourier expansion
\begin{equation*}
\Theta_{\Lambda}^{(d)}(Z)=\sum_{T\in \mathcal{P}_d} r_\Lambda(T)e^{\pi i \mathrm{Tr}(TZ)},
\end{equation*}
with 
\begin{equation*}
\mathcal{P}_d=\left\{T\in \mathbb{Z}^{d\times d}\left|\, T\text{ symmetric, positive semi-definite, even}\right.\right\},
\end{equation*}
where an integral matrix is called even if all of its diagonal elements are even. The Fourier coefficients are the representation numbers
\begin{equation*}
r_\Lambda(T)=\#\left\{x\in\Lambda^d\left|\, Q(x)=T\right.\right\}.
\end{equation*}
It turns out that the representation numbers and thus the theta series of degree $d$ for the lattices $\Gamma_8\oplus\Gamma_8$ and $\Gamma_{16}$ are closely related to the energy spectrum of harmonic maps from flat tori $T^d$ to $T_{8,8}$ and $T_{16}$, respectively.
\begin{prop}\label{prop:main}
If $\Theta_{\Gamma_8\oplus\Gamma_8}^{(d)}=\Theta_{\Gamma_{16}}^{(d)}$ for an integer $d\in\mathbb{N}$, then the energy spectrum (including multiplicities) of harmonic maps from any given flat torus $T^d$ to the tori $T_{8,8}$ and $T_{16}$ are the same.
\end{prop}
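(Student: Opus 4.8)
The plan is to translate the statement into lattice theory: I will set up an explicit bijection between homotopy classes of harmonic maps $T^d\to\mathbb{R}^{16}/\Gamma$ and $d$-tuples $x\in\Gamma^d$, and then observe that the energy of the map attached to $x$ depends on $x$ only through the Gram matrix $Q(x)$, which is exactly the index $T\in\mathcal{P}_d$ appearing in the Fourier expansion of $\Theta_\Gamma^{(d)}$. To fix notation, write the source torus as $T^d=\mathbb{R}^d/\Lambda$ with $\Lambda=A\mathbb{Z}^d$ for some $A\in\mathrm{GL}_d(\mathbb{R})$, and set $G=A^tA$, the Gram matrix of $\Lambda$, so that $\mathrm{vol}_{g_d}(T^d)=\sqrt{\det G}$. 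Let $\Gamma\in\{\Gamma_8\oplus\Gamma_8,\Gamma_{16}\}$ and write $\Gamma=W\mathbb{Z}^{16}$ with $W\in\mathrm{GL}_{16}(\mathbb{R})$. By the discussion in Section~\ref{sect:flat tori}, homotopy classes of harmonic maps $T^d\to\mathbb{R}^{16}/\Gamma$ correspond to $\mathrm{Hom}_{\mathbb{Z}}(\Lambda,\Gamma)$ (the translation part is energy-preserving and invisible to homotopy); the chosen bases identify this set with $\mathbb{Z}^{16\times d}$, and reading off columns identifies $N\in\mathbb{Z}^{16\times d}$ with the $d$-tuple $x=(x_1,\dots,x_d)\in\Gamma^d$ whose $j$-th entry $x_j$ is the image under $W$ of the $j$-th column of $N$. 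The harmonic representative of the class of $N$ has constant differential equal to the linear map $C=WNA^{-1}$, i.e.\ the unique $C\in\mathbb{R}^{16\times d}$ with $Cv_j=x_j$.

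Next I would compute the energy. By \eqref{eqn:energy harmonic tori} and cyclicity of the trace,
\[
E[f]=\tfrac12\,\mathrm{Tr}(C^tC)\sqrt{\det G}=\tfrac12\,\mathrm{Tr}\big((A^tA)^{-1}N^tW^tWN\big)\sqrt{\det G}=\tfrac12\,\mathrm{Tr}\big(Q(x)\,G^{-1}\big)\sqrt{\det G},
\]
where I use that the $(i,j)$ entry of $N^tW^tWN=(WN)^t(WN)$ is $\langle x_i,x_j\rangle=Q(x_i,x_j)$, since $Q$ on $\Gamma$ is the restriction of the standard Euclidean scalar product, so $N^tW^tWN=Q(x)$. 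Because $\Gamma$ is even, the matrix $Q(x)$ is symmetric, positive semi-definite and has even diagonal, i.e.\ $Q(x)\in\mathcal{P}_d$. Hence $E[f]$ depends on $N$ only through $Q(x)\in\mathcal{P}_d$, via the fixed linear functional $\ell(T)=\tfrac12\,\mathrm{Tr}(TG^{-1})\sqrt{\det G}$ determined by the source torus $T^d$ alone.

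Finally I would compare multiplicities. For $E\in\mathbb{R}_{\geq 0}$ the multiplicity of $E$ in the energy spectrum of harmonic maps $T^d\to\mathbb{R}^{16}/\Gamma$ is the number of $N\in\mathbb{Z}^{16\times d}$ (equivalently $x\in\Gamma^d$) with $\ell(Q(x))=E$, namely
\[
\sum_{\substack{T\in\mathcal{P}_d\\ \ell(T)=E}}\#\{x\in\Gamma^d: Q(x)=T\}=\sum_{\substack{T\in\mathcal{P}_d\\ \ell(T)=E}}r_\Gamma(T).
\]
If $\Theta_{\Gamma_8\oplus\Gamma_8}^{(d)}=\Theta_{\Gamma_{16}}^{(d)}$, comparing Fourier coefficients gives $r_{\Gamma_8\oplus\Gamma_8}(T)=r_{\Gamma_{16}}(T)$ for every $T\in\mathcal{P}_d$; since $\ell$ and $\mathcal{P}_d$ depend only on $T^d$, the two sums agree term by term for every $E$, so the energy spectra with multiplicities coincide.

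I expect the only delicate points to be bookkeeping rather than analysis: verifying that $C=WNA^{-1}$ is the linear part of the harmonic representative in the class labelled by $N$, that the column-tuple construction is a genuine bijection $\mathbb{Z}^{16\times d}\xrightarrow{\ \sim\ }\Gamma^d$ compatible with the quadratic form, and that $Q(x)$ automatically lies in $\mathcal{P}_d$ — this last point is where the hypothesis that $\Gamma$ is even is used. Once harmonicity has been reduced to affineness there is no further analytic content, and the proof is precisely this linear-algebra dictionary combined with the definition of the Siegel theta series.
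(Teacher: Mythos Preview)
Your proof is correct and follows essentially the same route as the paper: one fixes a basis of the source lattice, observes that the energy of the harmonic map attached to a $d$-tuple $x\in\Gamma^d$ is $\tfrac12\,\mathrm{Tr}(Q(x)G^{-1})\sqrt{\det G}$ (the paper writes this as $\tfrac12\,\mathrm{Tr}(Q(\gamma)(b^tb)^{-1})\det(b)$), and hence that the multiplicity of a given energy $E$ is a sum $\sum_{T}r_\Gamma(T)$ over a set of $T\in\mathcal P_d$ determined by the source torus alone. The only cosmetic difference is that you parametrize $\Gamma^d$ via an explicit basis matrix $W$ and integer matrices $N$, whereas the paper works directly with tuples $\gamma\in\Gamma^d$; the underlying argument is identical.
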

It is known that 
\begin{align*}
\Theta_{\Gamma_8\oplus\Gamma_8}^{(d)}&=\Theta_{\Gamma_{16}}^{(d)},\quad d=1,2,3\\
\Theta_{\Gamma_8\oplus\Gamma_8}^{(d)}&\neq\Theta_{\Gamma_{16}}^{(d)},\quad d\geq 4.
\end{align*}
For $d=1$ this follows from the fact that the dimension of the space $M_8$ of modular forms of degree $1$ and weight $8$ is equal to $1$. The case $d=2$ was proved by Witt \cite{W} in 1941 (this is the article that Milnor referred to in \cite{M}) and Witt also claimed the result for $d=4$ and conjectured the case $d=3$ (known as ''problem of Witt''). The case $d\geq 3$ was proved independently by Igusa \cite{I} and Kneser \cite{Kn} in 1967. 

Using these results we show:
\begin{thm}\label{thm:main} Consider the energy spectrum of harmonic maps from flat tori $T^d$ to $T_{8,8}$ and $T_{16}$.
\begin{enumerate}
\item\label{item:1} For any given flat torus $T^d$ of dimension $d=1,2,3$ the energy spectrum (including multiplicities) for $T_{8,8}$ and $T_{16}$ are the same.
\item\label{item:2} In every dimension $d\geq 4$ there exists a flat torus $T^d$ and an energy $E\in\mathbb{R}_{\geq 0}$ whose multiplicities in the energy spectrum for $T_{8,8}$ and $T_{16}$ are different. 
\end{enumerate}
\end{thm}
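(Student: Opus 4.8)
The plan is to express the energy spectrum of harmonic maps $T^d\to\mathbb{R}^{16}/\Gamma$ directly in terms of the representation numbers $r_\Gamma(T)$, and then to feed in the stated equalities/inequalities of the Siegel theta series; for part \ref{item:2} the additional ingredient is a generic choice of the source torus. Fixing integral bases, write $\Lambda_d=P\mathbb{Z}^d$ and $\Gamma=R\mathbb{Z}^{16}$, so that $\det(R^tR)=1$ by unimodularity of $\Gamma$. A harmonic map $f\colon T^d\to\mathbb{R}^{16}/\Gamma$ has $C\Lambda_d\subset\Gamma$, i.e.\ $C=RNP^{-1}$ with $N\in\mathbb{Z}^{16\times d}$, and a short computation from \eqref{eqn:energy harmonic tori} gives
\[
E[f]=\tfrac12\sqrt{\det G}\;\mathrm{Tr}\!\big(N^tR^tRN\,G^{-1}\big)=\tfrac12\sqrt{\det G}\;\mathrm{Tr}\!\big(Q(y)\,G^{-1}\big),
\]
where $G=P^tP$ is the Gram matrix of $\Lambda_d$ (so $\mathrm{vol}_{g_d}(T^d)=\sqrt{\det G}$) and $y=(y_1,\dots,y_d)\in\Gamma^d$ is the $d$-tuple formed by the columns $y_j=Rn_j\in\Gamma$ of $RN$. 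Since homotopy classes of maps $T^d\to\mathbb{R}^{16}/\Gamma$ correspond bijectively to such $N$, hence to $d$-tuples $y\in\Gamma^d$, grouping the tuples $y$ by the value $Q(y)=T\in\mathcal{P}_d$ shows that the multiplicity of an energy $E$ for the target lattice $\Gamma$ equals
\[
\mathrm{mult}_\Gamma(E)=\sum_{\substack{T\in\mathcal{P}_d\\ \frac12\sqrt{\det G}\,\mathrm{Tr}(TG^{-1})=E}} r_\Gamma(T).
\]

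With this in hand, part \ref{item:1} is immediate: for $d=1,2,3$ the stated equality $\Theta^{(d)}_{\Gamma_8\oplus\Gamma_8}=\Theta^{(d)}_{\Gamma_{16}}$ forces $r_{\Gamma_8\oplus\Gamma_8}(T)=r_{\Gamma_{16}}(T)$ for all $T\in\mathcal{P}_d$, whence the two displayed sums coincide for every source torus $T^d$ and every $E$ (this is also exactly Proposition \ref{prop:main}). For part \ref{item:2}, fix $d\geq4$. Since $\Theta^{(d)}_{\Gamma_8\oplus\Gamma_8}\neq\Theta^{(d)}_{\Gamma_{16}}$ and the Fourier expansion determines the representation numbers, there is $T_0\in\mathcal{P}_d$ with $r_{\Gamma_8\oplus\Gamma_8}(T_0)\neq r_{\Gamma_{16}}(T_0)$; in particular $T_0$ lies in the countable set $\mathcal{S}\subset\mathcal{P}_d$ of matrices represented by at least one of the two lattices. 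I would then choose the source torus generically: as $T^d$ ranges over all flat tori, $G^{-1}$ ranges over all positive definite symmetric $d\times d$ matrices, and for each $T\in\mathcal{S}\setminus\{T_0\}$ the condition $\mathrm{Tr}\big((T-T_0)G^{-1}\big)=0$ cuts out a proper linear subspace of the space of symmetric matrices; since $\mathcal{S}$ is countable, a Baire-category (or Lebesgue-measure) argument produces a positive definite $G^{-1}$ avoiding all of them. For this source torus and $E_0:=\tfrac12\sqrt{\det G}\,\mathrm{Tr}(T_0G^{-1})$, the only $T\in\mathcal{S}$ contributing to the sum above is $T=T_0$, so $\mathrm{mult}_{\Gamma_8\oplus\Gamma_8}(E_0)=r_{\Gamma_8\oplus\Gamma_8}(T_0)\neq r_{\Gamma_{16}}(T_0)=\mathrm{mult}_{\Gamma_{16}}(E_0)$.

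The Gram-matrix computation and the translation of homotopy classes into $d$-tuples are routine; the point that needs genuine care is this last step, namely that a single choice of source lattice can separate $T_0$ from every other $T$ carrying a nonzero representation number for either target. This works precisely because that family of $T$'s is countable, so the linear functional $T\mapsto\mathrm{Tr}(TG^{-1})$ can be made injective on it after deleting a meagre set of candidate Gram matrices. (An alternative to this genericity step: if the energy spectra with multiplicities agreed for every source torus $T^d$, then $\Theta^{(d)}_{\Gamma_8\oplus\Gamma_8}$ and $\Theta^{(d)}_{\Gamma_{16}}$ would agree on the totally real locus $\{iY:Y>0\}\subset\mathcal{H}_d$, hence everywhere by analytic continuation, contradicting $\Theta^{(d)}_{\Gamma_8\oplus\Gamma_8}\neq\Theta^{(d)}_{\Gamma_{16}}$ for $d\geq4$.)
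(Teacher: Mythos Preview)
Your argument is correct. Part~\ref{item:1} is handled exactly as in the paper, via Proposition~\ref{prop:main} and the known equality of theta series for $d\leq 3$.

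For part~\ref{item:2} your route is genuinely different from the paper's. The paper works with the \emph{specific} matrix $T_0=\mathrm{diag}(2,2,2,2)$ (for which $r_{\Gamma_8\oplus\Gamma_8}(T_0)\neq r_{\Gamma_{16}}(T_0)$ is known from Witt/Igusa) and builds an \emph{explicit} source torus: it takes $(b^tb)^{-1}=M$ with off-diagonal entries $1/\pi,\ldots,1/\pi^6$, so that the trace condition $\mathrm{Tr}(Q(\gamma)M)=8$ forces, by linear independence of powers of $1/\pi$ over $\mathbb{Q}$, that $Q(\gamma)$ is diagonal with diagonal sum $8$. The paper must then separately dispose of the degenerate diagonal possibilities (some $\gamma_i=0$), and it does so by invoking the $d\leq 3$ equality of theta series once more. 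Your approach instead takes an abstract $T_0$ with differing representation numbers and uses a Baire/measure genericity argument on $G^{-1}$ to isolate $T_0$ from the countable set of represented matrices; no appeal to the $d\leq3$ case is needed, and the argument is uniform in $d\geq4$. The trade-off is that the paper produces a concrete source lattice one can write down, whereas your torus is only shown to exist. Your parenthetical alternative via analytic continuation of the theta series from $\{iY:Y>0\}$ is also valid and arguably the cleanest way to establish existence.
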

The case $d=1$ (the length spectrum of $T_{8,8}$ and $T_{16}$ are the same) was known before and implies that $T_{8,8}$ and $T_{16}$ are isospectral for the Laplacian.

\section{Harmonic maps between flat tori}\label{sect:flat tori}
Let $(T^m=\mathbb{R}^m/\Lambda_m,g_m)$ and $(T^n=\mathbb{R}^n/\Lambda_n,g_n)$ be flat tori. The following is well-known (see \cite[p.~129]{ES}, \cite{F}, \cite{NS}).
\begin{prop}
A map $f\colon T^m\rightarrow T^n$ is harmonic if and only if it is affine.
\end{prop}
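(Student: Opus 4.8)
The plan is to pass to the universal covers $\mathbb{R}^m\to T^m$ and $\mathbb{R}^n\to T^n$ and to exploit that, because both tori are flat, the harmonic map equation decouples into the ordinary Laplace equation for the components of the lifted map.

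For the forward direction (affine maps are harmonic) I would lift an affine map $f_{C,s}$ to the map $x\mapsto Cx+s$ on $\mathbb{R}^m$ and work in the linear coordinates on $T^m$ and $T^n$ inherited from Euclidean space, in which both metrics are the constant standard metrics. Then all Christoffel symbols of $g_n$ vanish and the Laplace--Beltrami operator of $g_m$ is the flat Laplacian $\Delta=\sum_i\partial_i^2$, so the tension field reduces to $\tau(f)^\alpha=\Delta f^\alpha$, which vanishes on the affine function $Cx+s$. Equivalently, $df=C$ is parallel, hence $f$ is harmonic.

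For the converse, given a harmonic map $f\colon T^m\to T^n$, I would lift $\mathbb{R}^m\to T^m\xrightarrow{f}T^n$ to a smooth map $\tilde f\colon\mathbb{R}^m\to\mathbb{R}^n$ (possible since $\mathbb{R}^m$ is simply connected); by the same coordinate computation each component $\tilde f^\alpha$ is a harmonic function on $\mathbb{R}^m$. Since $f$ descends to $T^m$, for every $\lambda\in\Lambda_m$ the continuous map $x\mapsto\tilde f(x+\lambda)-\tilde f(x)$ takes values in the discrete set $\Lambda_n$, so it is a constant $\rho(\lambda)\in\Lambda_n$, and $\rho\colon\Lambda_m\to\Lambda_n$ is readily seen to be a group homomorphism. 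Extending $\rho$ $\mathbb{R}$-linearly to a matrix $C\in\mathbb{R}^{n\times m}$ with $C\Lambda_m\subset\Lambda_n$ (using that a $\mathbb{Z}$-basis of $\Lambda_m$ is an $\mathbb{R}$-basis of $\mathbb{R}^m$), the difference $w=\tilde f-Cx$ is $\Lambda_m$-periodic, hence descends to a map $T^m\to\mathbb{R}^n$, and it is still component-wise harmonic. An integration by parts on the closed manifold $T^m$, namely $\int_{T^m}|\nabla w^\alpha|^2\,\mathrm{dvol}_{g_m}=-\int_{T^m}w^\alpha\,\Delta w^\alpha\,\mathrm{dvol}_{g_m}=0$, then forces $w$ to be a constant $s\in\mathbb{R}^n$. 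Therefore $\tilde f(x)=Cx+s$ and $f=f_{C,s}$ is affine.

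I expect the only delicate point to be the reduction of the nonlinear harmonic map equation to the scalar equations $\Delta\tilde f^\alpha=0$, which is exactly where flatness of both the domain and the target enters: one needs coordinates in which both metrics are simultaneously constant. Once this is established, the rest is the standard fact that a harmonic function on a closed manifold is constant, applied to the periodic part $w$ of the lift.
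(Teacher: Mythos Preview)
Your proof is correct and follows essentially the same route as the paper: lift to the universal covers, use flatness to reduce to the scalar Laplace equation, and exploit discreteness of $\Lambda_n$ to produce a $\Lambda_m$-periodic harmonic function that must be constant by compactness. The only cosmetic difference is in this last step---the paper notes that each $\partial_{x_k}\tilde f$ is $\Lambda_m$-periodic and applies the maximum principle, whereas you first subtract the linear part $Cx$ and apply integration by parts to the remainder; both arguments amount to the standard fact that harmonic functions on a closed manifold are constant.
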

For completeness we give a proof.
\begin{proof}
The proof is almost verbatim to the the case of holomorphic maps between complex tori \cite[p.~325f]{GH}. Any harmonic map $f\colon T^m\rightarrow T^n$ lifts to a harmonic map $\tilde{f}\colon \mathbb{R}^m\rightarrow\mathbb{R}^n$ with
\begin{equation*}
\tilde{f}(x+\lambda)-\tilde{f}(x)\in\Lambda_n\quad\forall x\in\mathbb{R}^m,\lambda\in\Lambda_m.
\end{equation*}
For a given vector $\lambda\in\Lambda_m$ consider the map 
\begin{align*}
\tilde{f}_\lambda\colon \mathbb{R}^m&\longrightarrow\mathbb{R}^n\\
x&\longmapsto\tilde{f}(x+\lambda)-\tilde{f}(x).
\end{align*}
Then $\tilde{f}_\lambda$ is a smooth map with image in the lattice $\Lambda_n$, hence constant. It follows that
\begin{equation*}
\partial_{x_k}\tilde{f}(x+\lambda)=\partial_{x_k}\tilde{f}(x)\quad\forall x\in\mathbb{R}^m,\lambda\in\Lambda_m,
\end{equation*}
for each $k=1,\ldots,m$, hence $\partial_{x_k}\tilde{f}$ descends to a harmonic map $T^m\rightarrow\mathbb{R}^n$. Since $T^m$ is compact, this map has to be constant by the maximum principle, which implies that $\tilde{f}$ is affine.
\end{proof}
\begin{rem}
This result can be generalized: Let $(\Sigma,h)$ be a closed, oriented Riemannian manifold and
\begin{equation*}
a\colon \Sigma\rightarrow A(\Sigma)=H_1(\Sigma;\mathbb{R})/H'_1(\Sigma;\mathbb{Z})\cong T^{b_1(\Sigma)}
\end{equation*}
the Albanese map (the prime indicates the image of integral homology in real homology). In \cite{NS} it is shown that every harmonic map $f\colon \Sigma\rightarrow T^n$ factors through $a$:
\begin{equation*}
\begin{tikzcd}
\Sigma \ar[r, "a"]\ar[rd, "f"']& A(\Sigma)\ar[d, "\phi"]\\
& T^n
\end{tikzcd}
\end{equation*}
where $\phi$ is a uniquely determined affine map. Conversely, for every affine map $\phi\colon A(\Sigma)\rightarrow T^n$ the map $f=\phi\circ a$ is harmonic.
\end{rem}

\section{Milnor's example of two isospectral $16$-dimensional tori}
Let $L_n\subset\mathbb{Z}^n$ be the lattice defined by
\begin{equation*}
\textstyle L_n=\left\{z\in\mathbb{Z}^n\left|\, \sum_{i=1}^n z_i\text{ is even}\right.\right\}
\end{equation*}
and $\Gamma_n\subset\mathbb{R}^n$ the lattice generated by $L_n$ and the vector
\begin{equation*}
\left(\tfrac{1}{2},\ldots,\tfrac{1}{2}\right)\in\tfrac{1}{2}\mathbb{Z}^n.
\end{equation*}
We consider the $16$-dimensional lattices (see \cite{M} and the discussion in \cite{BGM, C})
\begin{align*}
\Gamma_8\oplus\Gamma_8&\subset\mathbb{R}^8\oplus\mathbb{R}^8=\mathbb{R}^{16}\\
\Gamma_{16}&\subset\mathbb{R}^{16}.
\end{align*}
With the bilinear form $Q$ induced from the standard Euclidean scalar product $\langle\,,\rangle$ on $\mathbb{R}^{16}$ both lattices are integral and even, i.e.
\begin{align*}
\langle w,z\rangle&\in\mathbb{Z}\\
|w|^2&=\langle w,w\rangle \in 2\mathbb{N}\,(w\neq 0)
\end{align*}
for all vectors $w,z$ in one of the lattices.

For a lattice $\Gamma$ equal to either $\Gamma_8\oplus\Gamma_8$ or $\Gamma_{16}$ and $T$ given by the associated flat torus
\begin{align*}
T_{8,8}&=\mathbb{R}^{16}/\Gamma_8\oplus\Gamma_8\\
T_{16}&=\mathbb{R}^{16}/\Gamma_{16}
\end{align*}
consider a harmonic map
\begin{align*}
f_{C,s}\colon T^d&\longrightarrow T\\
[x]&\mapsto f([x])=[Cx+s]
\end{align*}
from a flat torus $T^d=\mathbb{R}^d/\Lambda_d$, where $C\in\mathbb{R}^{16\times d}$ with $C\Lambda_d\subset\Gamma$ and $s\in\mathbb{R}^{16}$. Let $(b_1,\ldots,b_d)$ be a fixed basis of $\Lambda_d$ and $\gamma_k=Cb_k$ the images in $\Gamma$. Writing
\begin{align*}
b&=(b_1,\ldots,b_d)\in \mathrm{GL}(d,\mathbb{R})\\
\gamma&=(\gamma_1,\ldots,\gamma_d)\in\Gamma^d\subset \mathbb{R}^{16\times d}
\end{align*}
for the matrices of column vectors we have $\gamma=Cb$ and
\begin{equation*}
C^tC=(b^t)^{-1}\gamma^t\gamma b^{-1}=(b^t)^{-1}Q(\gamma)b^{-1}.
\end{equation*}
By equation \eqref{eqn:energy harmonic tori} the energy of the harmonic map $f=f_{C,s}$ is given by
\begin{equation*}
E[f]=\tfrac{1}{2}\mathrm{Tr}\left(Q(\gamma)(b^tb)^{-1}\right)\det(b).
\end{equation*}
Since $C$ determines $\gamma$ and vice versa, the multiplicity of harmonic maps $T^d\rightarrow T$ of energy $E$ is equal to
\begin{equation}\label{eqn:mult energy gamma}
\#\left\{\gamma\in\Gamma^d\left|\, E=\tfrac{1}{2}\mathrm{Tr}\left(Q(\gamma)(b^tb)^{-1}\right)\det(b)\right.\right\}.
\end{equation}
We deduce the following:
\begin{lem}
For any given flat torus $T^d=\mathbb{R}^d/\Lambda_d$, with basis $b$ of $\Lambda_d$, and any energy $E\in\mathbb{R}_{\geq 0}$ the multiplicity of harmonic maps $T^d\rightarrow T$ with energy $E$ is equal to
\begin{equation*}
\#\left\{\gamma\in\Gamma^d\left|\, Q(\gamma)=S, E=\tfrac{1}{2}\mathrm{Tr}\left(S(b^tb)^{-1}\right)\det(b)\right.\right\}=\sum_{S\in \mathcal{Q}(E)}r_\Gamma(S),
\end{equation*}
where
\begin{equation*}
\mathcal{Q}(E)=\left\{S\in\mathcal{P}_d\mid E=\tfrac{1}{2}\mathrm{Tr}(S(b^tb)^{-1})\det(b)\right\}.
\end{equation*}
\end{lem}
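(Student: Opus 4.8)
The plan is to derive the Lemma directly from the multiplicity formula \eqref{eqn:mult energy gamma} by sorting the matrices $\gamma\in\Gamma^d$ according to their Gram matrix. First I would record that for any $\gamma=(\gamma_1,\ldots,\gamma_d)\in\Gamma^d$ the matrix $Q(\gamma)=(\langle\gamma_i,\gamma_j\rangle)_{i,j}$ lies in $\mathcal{P}_d$: it is symmetric, it is positive semi-definite because $v^tQ(\gamma)v=\bigl|\sum_i v_i\gamma_i\bigr|^2\geq 0$ for all $v\in\mathbb{R}^d$, its entries are integers since $\Gamma$ is integral, and its diagonal entries $\langle\gamma_i,\gamma_i\rangle=|\gamma_i|^2$ are even since $\Gamma$ is even (the case $\gamma_i=0$ giving $0$). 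So every $S=Q(\gamma)$ that can occur is an element of $\mathcal{P}_d$, and $r_\Gamma(S)=\#\{\gamma\in\Gamma^d\mid Q(\gamma)=S\}$ is precisely the Fourier coefficient introduced earlier.

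Next I would observe that, with the basis $b$ of $\Lambda_d$ fixed once and for all, the energy computed above, $E[f_{C,s}]=\tfrac12\mathrm{Tr}\bigl(Q(\gamma)(b^tb)^{-1}\bigr)\det(b)$, depends on $\gamma$ only through $S=Q(\gamma)$. Hence the set appearing in \eqref{eqn:mult energy gamma} decomposes as the disjoint union
\[
\{\gamma\in\Gamma^d\mid E[f_{C,s}]=E\}=\bigsqcup_{S\in\mathcal{Q}(E)}\{\gamma\in\Gamma^d\mid Q(\gamma)=S\},
\]
with $\mathcal{Q}(E)=\{S\in\mathcal{P}_d\mid E=\tfrac12\mathrm{Tr}(S(b^tb)^{-1})\det(b)\}$ as in the statement; the union is indexed only by $S\in\mathcal{P}_d$ by the previous paragraph, and it is genuinely disjoint because $S$ is determined by $\gamma$. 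Passing to cardinalities and using \eqref{eqn:mult energy gamma} together with the definition of $r_\Gamma(S)$ then gives
\[
\#\{\gamma\in\Gamma^d\mid E[f_{C,s}]=E\}=\sum_{S\in\mathcal{Q}(E)}r_\Gamma(S),
\]
which is the asserted identity, and the left-hand side is by construction the multiplicity of harmonic maps $T^d\to T$ of energy $E$.

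The argument is essentially formal reindexing, so there is no genuinely hard step; the one point that must be checked — and the place where the hypotheses that $\Gamma$ is \emph{integral} and \emph{even} are used — is the claim that $Q(\gamma)\in\mathcal{P}_d$, which is what allows the sum to be restricted to $\mathcal{Q}(E)\subset\mathcal{P}_d$ without losing any contributions. I would also keep track of the orientation of the basis $b$ so that $\det(b)=\mathrm{vol}_{g_d}(T^d)>0$, consistent with \eqref{eqn:energy harmonic tori}; replacing $b$ by $b'=bg$ for $g\in\mathrm{GL}(d,\mathbb{Z})$ changes neither the set of harmonic maps nor the stated expression, since $Q(\gamma)\mapsto g^tQ(\gamma)g$ and $(b'{}^tb')^{-1}$ transform compatibly, so the right-hand side is independent of the chosen basis as it must be.
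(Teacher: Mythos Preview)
Your proof is correct and follows exactly the approach implicit in the paper, which simply writes ``We deduce the following'' after equation \eqref{eqn:mult energy gamma} and states the Lemma without further argument. Your additional verification that $Q(\gamma)\in\mathcal{P}_d$ (using that $\Gamma$ is integral and even) and your remarks on basis independence make explicit what the paper leaves to the reader.
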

The set $\mathcal{Q}(E)$ depends only on the lattice $\Lambda_d$ and not on $\Gamma$, hence Proposition \ref{prop:main} follows.

\begin{proof}[Proof of Theorem \ref{thm:main}]
Claim (\ref{item:1}) follows from Proposition \ref{prop:main} and the results of Witt, Igusa and Kneser for $d\leq 3$ mentioned above.

For claim (\ref{item:2}) it is known \cite[p.~325]{W}, \cite[p.~854]{I}, \cite{DGC} that in the case $d=4$
\begin{equation}\label{eqn:r different}
r_{\Gamma_8\oplus \Gamma_8}(S)\neq r_{\Gamma_{16}}(S)
\end{equation}
for the diagonal matrix
\begin{equation*}
S=\mathrm{diag}(2,2,2,2).
\end{equation*}
This then also holds for all $d>4$ for 
\begin{equation*}
S=\mathrm{diag}(2,2,2,2,0,\ldots,0).
\end{equation*}
We consider the case $d=4$. Let
\begin{equation*}
\arraycolsep=3pt\def\arraystretch{1.5}
M=\left(\begin{array}{cccc} 1 & \frac{1}{\pi} & \frac{1}{\pi^2} & \frac{1}{\pi^3}  \\ \frac{1}{\pi} & 1 & \frac{1}{\pi^4} & \frac{1}{\pi^5} \\ \frac{1}{\pi^2} & \frac{1}{\pi^4} & 1 & \frac{1}{\pi^6} \\ \frac{1}{\pi^3} & \frac{1}{\pi^5} & \frac{1}{\pi^6}& 1\end{array}\right).
\end{equation*}
It is easy to check that $M$ is positive definite. According to the Cholesky decomposition there exists a unique upper triangular matrix $b\in \mathrm{GL}(4,\mathbb{R})$ such that
\begin{equation*}
M^{-1}=b^tb.
\end{equation*}
The matrix $b$ defines a basis $b_1,b_2,b_3,b_4$ of $\mathbb{R}^4$. Let $\Lambda_4$ be the integral lattice spanned by these vectors and $T^4=\mathbb{R}^4/\Lambda_4$ the associated flat torus.

Denote by $\Gamma$ the lattices $\Gamma_8\oplus\Gamma_8$ and $\Gamma_{16}$. For a $4$-tuple $\gamma=(\gamma_1,\gamma_2,\gamma_3,\gamma_4)\in\Gamma^4$ we have
\begin{align*}
\mathrm{Tr}(Q(\gamma)(b^tb)^{-1})&=\mathrm{Tr}(Q(\gamma)M)\\
&=|\gamma_1|^2+|\gamma_2|^2+|\gamma_3|^2+|\gamma_4|^2\\
&\quad +2\left(\tfrac{1}{\pi}\langle \gamma_1,\gamma_2\rangle+\tfrac{1}{\pi^2}\langle \gamma_1,\gamma_3\rangle+\tfrac{1}{\pi^3}\langle \gamma_1,\gamma_4\rangle\right)\\
&\quad \left.+\tfrac{1}{\pi^4}\langle \gamma_2,\gamma_3\rangle+\tfrac{1}{\pi^5}\langle \gamma_2,\gamma_4\rangle+\tfrac{1}{\pi^6}\langle \gamma_3,\gamma_4\rangle  \right).
\end{align*}
Consider the energy
\begin{equation*}
E=\tfrac{1}{2}\cdot8\cdot\det(b).
\end{equation*}
A harmonic map $f_{C,s}\colon T^4\rightarrow T$, determined by $C$ or equivalently $\gamma$, has energy $E$ if any only if
\begin{equation*}
8=\mathrm{Tr}(Q(\gamma)(b^tb)^{-1}).
\end{equation*}
For the chosen basis this is happens if and only if
\begin{align}
|\gamma_1|^2+|\gamma_2|^2+|\gamma_3|^2+|\gamma_4|^2&=8\label{eqn:gamma1}\\
\langle \gamma_i,\gamma_j\rangle&= 0\quad\forall i\neq j,\label{eqn:gamma2}
\end{align}
because $\langle\,,\rangle$ is integral on $\Gamma$. The second equation means that $Q(\gamma)$ has to be diagonal. If all lattice vectors $\gamma_i$ are non-zero, equation \eqref{eqn:gamma1} is equivalent to
\begin{equation}\label{eqn:gamma3}
|\gamma_1|^2=|\gamma_2|^2=|\gamma_3|^2=|\gamma_4|^2=2,
\end{equation}
because the lattice is even and positive definite. By the result mentioned at the beginning of the proof, the number of $4$-tuples $(\gamma_1,\gamma_2,\gamma_3,\gamma_4)$ that satisfy equation \eqref{eqn:gamma3} and \eqref{eqn:gamma2} is different for $\Gamma_8\oplus\Gamma_8$ and $\Gamma_{16}$.

If some of the $\gamma_i$ are zero, there are several possibilities, for example,
\begin{equation}
|\gamma_1|^2=4,\,|\gamma_2|^2=|\gamma_3|^2=2,\,|\gamma_4|^2=0.
\end{equation}
However, the number of $4$-tuples of this type is the same for both lattices, because $\Theta_{\Gamma_8\oplus\Gamma_8}^{(d)}=\Theta_{\Gamma_{16}}^{(d)}$ for $d\leq 3$. 

It follows that the total number of $4$-tuples $\gamma$ that satisfy \eqref{eqn:gamma1} and \eqref{eqn:gamma2} is different for $\Gamma_8\oplus\Gamma_8$ and $\Gamma_{16}$. This proves the claim with \eqref{eqn:mult energy gamma}.
\end{proof}

\bibliographystyle{amsplain}

\begin{thebibliography}{999}

\bibitem{AS} T.~Adachi, T.~Sunada, {\em Energy spectrum of certain harmonic mappings}, Compositio Math.~{\bf 56} (1985), no.~2, 153--170.

\bibitem{BGM} M.~Berger, P.~Gauduchon, E.~Mazet, {\sl Le spectre d'une vari\'et\'e riemannienne}. Lecture Notes in Mathematics, Vol.~194. Springer-Verlag, Berlin-New York 1971 vii+251 pp. 

\bibitem{CCR} R.~Catenacci, M.~Cornalba, C.~Reina, {\em On the energy spectrum and parameter spaces of classical $\mathrm{CP}^n$ models}, Comm.~Math.~Phys.~{\bf 89} (1983), no.~3, 375--386.

\bibitem{C} S.~Chen, {\em Constructing isospectral but nonisometric Riemannian manifolds}, Canad.~Math.~Bull.~{\bf 35} (1992), no.~3, 303--310.

\bibitem{DGC} F.~Dalla Piazza, D.~Girola, S.L.~Cacciatori, {\em Classical theta constants vs.~lattice theta series, and super string partition functions}, J.~High Energ.~Phys.~2010, {\bf 82} (2010).

\bibitem{ES} J.~Eells, Jr., J.~H.~Sampson, {\em Harmonic mappings of Riemannian manifolds}, Amer.~J.~Math.~{\bf 86} (1964) 109--160.

\bibitem{F} P.~M.~N.~Feehan, {\em Relative energy gap for harmonic maps of Riemann surfaces into real analytic Riemannian manifolds}, Proc.~Amer.~Math.~Soc.~{\bf 146} (2018), no.~7, 3179--3190. 

\bibitem{Fr} E.~Freitag, {\sl Siegelsche Modulfunktionen}. Grundlehren der Mathematischen Wissenschaften 254. Springer-Verlag, Berlin, 1983. x+341 pp.

\bibitem{Fu} F.~B.~Fuller, {\em Harmonic mappings}, Proc.~Nat.~Acad.~Sci.~U.S.A.~{\bf 40} (1954), 987--991.

\bibitem{GM} C.~Gordon, Y.~Mao, {\em Comparisons of Laplace spectra, length spectra and geodesic flows of some Riemannian manifolds}, Math.~Res.~Lett.~{\bf 1} (1994), no.~6, 677--688. 

\bibitem{GWW} C.~Gordon, D.~L.~Webb, S.~Wolpert, {\em One cannot hear the shape of a drum}, Bull.~Amer.~Math.~Soc.~(N.S.) {\bf 27} (1992), no.~1, 134--138.

\bibitem{GH} P.~Griffiths, J.~Harris, {\sl Principles of algebraic geometry}. Wiley Classics Library. John Wiley \& Sons, Inc., New York, 1994. xiv+813 pp.

\bibitem{I} J.-I.~Igusa, {\em Modular forms and projective invariants}, Amer.~J.~Math.~{\bf 89} (1967), 817--855. 

\bibitem{K} M.~Kac, {\em Can one hear the shape of a drum?}, Amer.~Math.~Monthly {\bf 73} (1966), no.~4, part II, 1--23. 

\bibitem{Kl} H.~Klingen, {\sl Introductory lectures on Siegel modular forms}. Cambridge Studies in Advanced Mathematics, 20. Cambridge University Press, Cambridge, 1990. x+162 pp.

\bibitem{Kn} M.~Kneser, {\em Lineare Relationen zwischen Darstellungsanzahlen quadratischer Formen}, Math.~Ann.~{\bf 168} (1967), 31--39. 

\bibitem{LW} Y.~Li, Y.~Wang, {\em A counterexample to the energy identity for sequences of $\alpha$-harmonic maps}, Pacific J.~Math.~{\bf 274} (2015), no.~1, 107--
123.

\bibitem{L} F.~Lin, {\em Mapping problems, fundamental groups and defect measures}, Acta Math.~Sin.~(Engl.~Ser.) {\bf 15} (1999), no.~1, 25--52.

\bibitem{McR} D.~B.~McReynolds, A.~W.~Reid, {\em The genus spectrum of a hyperbolic 3-manifold}, Math.~Res.~Lett.~{\bf 21} (2014), no.~1, 169--185. 

\bibitem{M} J.~Milnor, {\em Eigenvalues of the Laplace operator on certain manifolds},
Proc.~Nat.~Acad.~Sci.~U.S.A.~{\bf 51} (1964), 542. 

\bibitem{NS} T.~Nagano, B.~Smyth, {\em Minimal varieties and harmonic maps in tori}, Comment.~Math.~Helv.~{\bf 50} (1975), 249--265.
 
\bibitem{V} G.~Valli, {\em On the energy spectrum of harmonic 2-spheres in unitary groups}, Topology {\bf 27} (1988), no.~2, 129--136. 

\bibitem{W} E.~Witt, {\em Eine Identit\"at zwischen Modulformen zweiten Grades}, Abh.~Math.~Sem.~Hansischen Univ.~{\bf 14} (1941), 323--337. 

\end{thebibliography}

\bigskip
\bigskip

\end{document}